\newtheorem{theorem}{Theorem}[section]
\newtheorem{corollary}{Corollary}[section]
\newtheorem{lemma}{Lemma}[section]
\newtheorem{proposition}{Proposition}[section]
\newtheorem{claim}{Claim}[section]
\newtheorem{definition}{Definition}[section]
\newtheorem{example}{Example}[section]
\newcommand{\ps}[1]
\author{{Yuhui Jin} \\
	Department of Mathematics\\
	California Institute of Technology\\
	Pasadena, CA 91125 \\
	\texttt{yjjin@caltech.edu} \\
	}
\date{}
\begin{document}
\title{Central Limit Theorem on the Conjugacy Measure of Symmetric Groups}

\maketitle 
\author{Yuhui Jin}
\begin{abstract}

Regarding the conjugacy representation on symmetric groups, we initiate a normalized measure emerging from this representation, namely the conjugacy measure. A central limit theorem for character ratios of random representations of the symmetric group on the conjugacy measure is obtained.
\end{abstract}
 
\section{Introduction}

Plancherel measure on symmetric groups draws significant interest in the probability perspective. Kerov\cite{Kerov93gaussianlimit}, Ivanov and Olshanski \cite{olshanski} provide a central limit theorem on the character ratio of Plancherel measure of symmetric group. Character ratio is an important quantity to understand random walk on symmetric group generated by transposition.\cite{diaconis111}. Thus, central limit theorem implies spectrum of the random walk is asymptotically normal. A multitude approaches appears in several context. Hora\cite{Hora1998} proves this using method of moment with delicate analysis in random walk on symmetric group. Error bound on this random variable are given in Fulman's paper \cite{fulmansym} using Stein's method. 
q-analog on this measure appears in \cite{F2012},\cite{mellot} and a similar central limit theorem are given.
 
One measure on symmetric groups similar to Plancherel measure is the conjugacy measure. Recall Plancherel measure is defined by the character of irreducible component of regular representation. Instead of regular representation, we take the conjugacy representation, and it yields the conjugacy measure. Adin and Frumkin \cite{Adin1987} realize the ratio of regular and conjugacy character's norm limit to 1. Roichman\cite{Roichman1997} studies the ratio of multiplicity of conjugacy and dimension of the irreducible representation under the constraint of the maximum length of set partition corresponding to the irreducible representation.
In this paper, we are interested a central limit theorem on the character ratio of the conjugacy measure of symmetric groups. In section 2, we first review Hora\cite{Hora1998} and Kerov\cite{Kerov93gaussianlimit}'s theorem on central limit theorem on Plancherel measure. In section 3, we prove a central limit theorem of on the character ratio of the conjugacy measure.

\section{Preliminaries}

\subsection{The Conjugacy Representation of Finite Symmetric Groups}
There are two natural representations of any finite group $G$ on the group algebra $\mathbf{C}[G]=\{\sum_{g\in G} a_gg|a_g\in \mathbb{C}\}$: the (left) regular representation $\phi: \phi(h)\dot \sum_{g\in G} a_gg= \sum_{g\in G} a_ghg$ and the conjugacy representation $\psi: \psi(h)\dot \sum_{g\in G} a_gg= \sum_{g\in G} a_ghgh^{-1}$. 
The following results are from classical representation theory that can be find from any introductory textbook. \[\chi^{\psi(g)}=\frac{|G|}{|C(g)|}\] where $C(g)$ is the conjugacy class of g;\[\chi^{\phi(g)}=|G| \text{ if } g=id \text{ else } \chi^{\phi(g)}=0 \]
In finite symmetric group $S_n$, every partition $\lambda$ of n has an associated irreducible representation $S^\lambda$ of $S_n$ over $\mathbb{C}$. The set $\{S^\lambda|\lambda \text{is a partition of n}\}$ forms the complete set of irreducible representations of $S_n \text{over} \mathrm{C}$. The multiplicity of $S^\lambda$ in a representation $\rho$, denoted by $m(S^\lambda,\rho)$ is given by \[m(S^\lambda,\rho)=\frac{1}{|G|} \sum_{g\in S_n} \chi^{\lambda} (g) \overline{\chi^{\rho}(g)},\]
where $\chi^{\rho}(g)^*$ is the complex conjugate of $\chi^{\rho}(g)$.
Thus $m(S^\lambda, \psi)= \sum_C \chi^{\lambda}(C)$ where the sum runs over all conjugacy class C in G, and $m(S^\lambda, \phi)= \chi^{\lambda}(id).$ 
We rewrite $\chi^{\lambda}(id)$  as $f^\lambda$, as it is the degree of $S^\lambda$. In the world of combinatorics, $f^\lambda$ is also the number of standard Young tableaux of shape $\lambda$. 

\subsection{Notations on Partitions}
Let n be a positive integer. The partitions of n are denoted by $\lambda=(\lambda_1,\lambda_2,\cdots,\lambda_k$ where $\lambda_1\geq \lambda_2\geq\cdots \geq\lambda_k\geq 1$ with $n=\sum_i \lambda_i$.  Let $n(\lambda)$ denotes the number of sum of $\lambda$, which is n in this case. Let $l(\lambda)$ denotes the order of $\lambda$, which is k in this case.  Given $\lambda\vdash n $ $l_{i}(\lambda)=\text{the number of j appearing }\lambda$. Next, we define two products involving $l_i(\lambda)$. We define $c(\lambda)=\prod_i (l_i(\lambda)! i^{l_i(\lambda)})$, where $\frac{n(\lambda)}{c(\lambda)}$ is the size of conjugacy class of $\lambda$ in $S_{n(\lambda)}.$ We write $d(\lambda)=\prod_i l_i(\lambda)! $
\subsection{Plancherel Measure}
\begin{definition}
\label{def1}
Let n be a positive integer. We denote the set of its irreducible representations by 
$S_n^{\wedge}$. Specifically, this is equivalent to the set consisting of all integer partitions of n.  The corresponding Plancherel measure over the set 
$S_n^{\wedge}$ is defined by $\mu_\phi(\lambda)=\frac{(f^{\lambda})^2}{n!}$.
\end{definition}

\begin{definition}
Similarly to definition \ref{def1}, we define a measure $\mu_\psi(\lambda)=\frac{f^{\lambda}\sum_{K\in \hat S_n} \chi^{\lambda}(K)}{|S_n|}$ to be the conjugacy measure. 
\end{definition}

Recall in Subsection 2.1, we showed that the multiplicity of the regular representation and the conjugacy representation. By replacing one $f^\lambda$ in the Plancherel measure by $m(S^\lambda,\psi)$, we have the definition of the conjugacy measure. 
The following equality and Corollary are classical results from representation theory, which shows that the conjugacy measure is well defined. 
\[\sum_{\lambda} \mu_\psi(\lambda)=1 \]
\begin{corollary}
    For any finite group G and any irreducible representation $\rho$ of G, the sum $\sum_C \chi^{\rho}(C)$ is a nonnegative integer.
\end{corollary}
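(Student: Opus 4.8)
The plan is to recognize the sum $\sum_C \chi^\rho(C)$ as the multiplicity of $\rho$ in the conjugacy representation $\psi$, which is a nonnegative integer by construction. Since $\psi$ is a genuine complex representation of the finite group $G$, it decomposes as a direct sum of irreducibles, and the number of copies of any fixed irreducible $\rho$ appearing in this decomposition is automatically a nonnegative integer. The entire content of the corollary therefore lies in the identity $\sum_C \chi^\rho(C) = m(\rho,\psi)$, and once this is in hand the conclusion is immediate.

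To establish this identity, I would start from the inner-product formula for multiplicity stated above,
$$m(\rho,\psi) = \frac{1}{|G|}\sum_{g\in G}\chi^\rho(g)\,\overline{\chi^\psi(g)},$$
and substitute the character value of the conjugacy representation $\chi^\psi(g) = |G|/|C(g)|$. Because this value equals the order of the centralizer of $g$, it is a positive integer and in particular real, so the complex conjugation is harmless and the prefactor $|G|$ cancels, leaving $m(\rho,\psi) = \sum_{g\in G}\chi^\rho(g)/|C(g)|$.

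The next step is to reorganize this sum over $G$ according to conjugacy classes. Since $\chi^\rho$ is a class function and $|C(g)|$ depends only on the class of $g$, each of the $|C|$ elements of a class $C$ contributes the identical summand $\chi^\rho(C)/|C|$; summing within the class gives exactly $\chi^\rho(C)$, and summing over all classes yields $m(\rho,\psi) = \sum_C \chi^\rho(C)$. Combined with the opening observation that $m(\rho,\psi) \in \mathbb{Z}_{\geq 0}$, this finishes the proof.

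There is no serious obstacle here: the statement is a direct corollary of the definitions, and the only point requiring a moment's care is the reality of $\chi^\psi$, which is what guarantees that $\sum_C \chi^\rho(C)$ comes out real, and in fact a nonnegative integer, even though the individual values $\chi^\rho(C)$ may themselves be complex. Correctly bookkeeping the conjugacy-class grouping is the one routine computation to carry out.
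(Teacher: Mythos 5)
Your proof is correct and is essentially the argument the paper intends: Section 2.1 already derives the identity $m(S^\lambda,\psi)=\sum_C\chi^\lambda(C)$ from the character value $\chi^\psi(g)=|G|/|C(g)|$, and the corollary is then immediate because the multiplicity of an irreducible in a genuine representation is a nonnegative integer. You have simply written out in full the same computation the paper leaves implicit, including the (correct) observation that reality of $\chi^\psi$ makes the conjugate harmless.
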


\begin{theorem}
\label{cltplancherel}
Take the Plancherel measure $\mu_{\phi}$ on the irreducible representation of the symmetric groups $S_n$,which is, in our case, on $\lambda\vdash n$ .The random variable \[X_{n,\nu}(\lambda)= \frac { [n]_{n(\nu)} \times \chi^{\lambda}(\nu)\sqrt{d(\nu)} }{f^\lambda \times n^{n(\nu)/2}\sqrt{c(\nu)} }\]is asymptotically normal with mean 0 and variance 1 if $\nu$ satisfies the condition $l(\nu)=1$ and $n(\nu)>1$. For any $\nu^{(1)},\nu^{(2)},\cdots, \nu^{(m)}$ partitions with any $n(\nu^{(i)})<n$ and $l_1(\nu^{(i)})=0$ and any $k_1,k_2, \cdots k_m\in \mathrm{N}$, we have
\begin{equation}
    \lim_{n\to\infty}E[X_{n,\nu^{(1)}}^{k_1} X_{n,\nu^{(2)}}^{k_2}\cdots X_{n,\nu^{(m)}}^{k_m}]=\prod_{j\geq 2} \int_{-\infty}^\infty \frac{e^{-x^2}}{\sqrt{2\pi}} (\mathcal{H}_{l_j(\nu^{(1)})}(x))^{k_1} (\mathcal{H}_{l_j(\nu^{(2)})}(x))^{k_2} \cdots (\mathcal{H}_{l_j(\nu^{(m)})}(x))^{k_m} dx
\end{equation}
where $\mathcal{H}$ is the Hermite polynomial. 
\end{theorem}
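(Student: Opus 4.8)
The plan is to pass from the character ratios to the \emph{normalized characters} and then run the method of moments inside the algebra of polynomial functions on Young diagrams. For a partition $\rho$ with $n(\rho)\le n$, write $\Sigma_\rho(\lambda)=[n]_{n(\rho)}\,\chi^\lambda(\rho\cup 1^{\,n-n(\rho)})/f^\lambda$ for the normalized character on a permutation of cycle type $\rho$ padded with fixed points. With this notation the random variable of the statement is just $X_{n,\nu}=\sqrt{d(\nu)/c(\nu)}\;n^{-n(\nu)/2}\,\Sigma_\nu$, so it suffices to control the joint moments of the $\Sigma_\nu$. First I would record the \textbf{fundamental expectation identity}: by column orthogonality of irreducible characters, $\sum_\lambda f^\lambda\chi^\lambda(\sigma)=n!\,\mathbbm{1}[\sigma=\mathrm{id}]$, hence $E_{\mu_\phi}[\chi^\lambda(\sigma)/f^\lambda]=\mathbbm{1}[\sigma=\mathrm{id}]$ under the Plancherel measure, and therefore
\[
E_{\mu_\phi}[\Sigma_\rho]=\begin{cases}[n]_{n(\rho)} & \rho=(1^{\,n(\rho)}),\\[2pt] 0 & \rho\text{ has a part}\ge 2.\end{cases}
\]
This identity is the engine that selects the surviving terms in every moment computation.

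Next I would invoke the \textbf{algebra structure} of the $\Sigma_\rho$ (the partial-permutation calculus of Ivanov--Kerov, equivalently shifted symmetric functions): the $\Sigma_\rho$ form a filtered linear basis, and a product $\Sigma_\mu\Sigma_\nu$ expands as $\Sigma_{\mu\sqcup\nu}$ plus strictly lower-degree terms coming from cycles that share support. Combined with the expectation identity, only the terms of a product $\prod_i\Sigma_{\nu^{(i)}}^{k_i}$ that collapse entirely to $(1^r)$ contribute, and tracking the exponent of $n$ shows that the leading contributions come exclusively from \emph{pairings} of equal-length cycles. Concretely I would establish the \textbf{approximate factorization over cycle lengths}, $\Sigma_\nu\approx\prod_{j\ge 2}\Sigma_{j^{\,l_j(\nu)}}$ at the relevant scale, together with the identification of the disjoint-cycle character with a Wick power, $\Sigma_{j^{\,r}}/n^{jr/2}\to j^{\,r/2}\,\mathcal H_r(\xi_j)$, where the $\xi_j$ are independent standard Gaussians and $\mathcal H_r$ is the $r$-th (probabilist's) Hermite polynomial; the appearance of Hermite polynomials is precisely the statement that Wick-ordered powers of a Gaussian \emph{are} its Hermite polynomials (one checks $r=2$ directly: $\Sigma_j^2=\Sigma_{j,j}+\text{lower}$ gives $\mathrm{Var}(\Sigma_j)\sim j\,n^{j}$ and $\Sigma_{j^2}/n^{j}\to j(\xi_j^2-1)=j\,\mathcal H_2(\xi_j)$). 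Since $d(\nu)/c(\nu)=\prod_j j^{-l_j(\nu)}$ cancels the factors $j^{\,l_j/2}$ exactly, this yields $X_{n,\nu}\to\prod_{j\ge 2}\mathcal H_{l_j(\nu)}(\xi_j)$. Specializing to $l(\nu)=1$, $\nu=(k)$, $k>1$, gives $X_{n,(k)}\to\mathcal H_1(\xi_k)=\xi_k$, a standard normal; asymptotic normality with mean $0$ and variance $1$ then follows from the moment convergence applied to a single $\nu$, since the Gaussian is determined by its moments.

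Finally I would assemble the joint moments. By the displayed convergence and independence of the $\xi_j$,
\[
\lim_{n\to\infty}E\Big[\textstyle\prod_i X_{n,\nu^{(i)}}^{k_i}\Big]
=\prod_{j\ge 2}E\Big[\textstyle\prod_i \mathcal H_{l_j(\nu^{(i)})}(\xi_j)^{k_i}\Big]
=\prod_{j\ge 2}\int_{-\infty}^{\infty}\frac{e^{-x^2/2}}{\sqrt{2\pi}}\prod_i \big(\mathcal H_{l_j(\nu^{(i)})}(x)\big)^{k_i}\,dx,
\]
which is the asserted formula (the hypothesis $l_1(\nu^{(i)})=0$ is exactly what removes the divergent factor $\Sigma_1=n$ and restricts the product to $j\ge 2$). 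The main obstacle I expect is the second step: controlling the structure constants of the $\Sigma$-algebra precisely enough to prove simultaneously that (i) the lower-order overlap terms are negligible, (ii) distinct cycle lengths decouple in the limit, and (iii) the disjoint-cycle characters converge to the Hermite/Wick powers. This is the content of Kerov's cumulant estimates and Biane's approximate factorization of characters; carrying these through with explicit bookkeeping of the powers of $n$ is where the genuine work lies, after which the method-of-moments wrapper and the resulting Gaussian integral are routine.
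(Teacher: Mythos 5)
Your outline is essentially correct, but it is a genuinely different route from the one the paper relies on. The paper does not reprove Theorem~\ref{cltplancherel}; it cites Kerov and Hora, and the proof it actually carries out (for the conjugacy-measure analogue, Theorem~\ref{clt_main}) shows what that argument looks like: expand the joint moment by character orthogonality, apply the Frobenius factorization-counting formula (Lemma~\ref{keylemma}) to turn each term into the number of tuples of permutations of prescribed cycle types whose product lies in a given class, classify tuples by their ``symbol,'' and show by direct power-of-$n$ bookkeeping that only the perfect-pairing symbols $(2,2,\dots,2)$ survive, whence matchings and Hermite polynomials. You instead work in the Ivanov--Kerov algebra of normalized characters $\Sigma_\rho$ (partial permutations / shifted symmetric functions), using the expectation identity $E_{\mu_\phi}[\Sigma_\rho]=[n]_{n(\rho)}\mathbbm{1}[\rho=(1^{n(\rho)})]$, the filtered product expansion $\Sigma_\mu\Sigma_\nu=\Sigma_{\mu\sqcup\nu}+\text{lower order}$, approximate factorization over cycle lengths, and the Wick-power characterization of Hermite polynomials. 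At bottom the structure constants of the $\Sigma$-algebra encode exactly the factorization counts $C_{\mu,\delta,\nu,k}$ of the paper, so the two computations are two bookkeeping schemes for the same combinatorics: the Hora-style route is more elementary and self-contained but demands careful hands-on asymptotics of the counts, while your route makes the Gaussian/Wick structure and the independence of the $\xi_j$ conceptually transparent at the price of importing the Ivanov--Olshanski machinery (whose hard part --- controlling the lower-order structure constants --- you correctly identify as the remaining work rather than prove). Two small remarks: your normalization checks ($d(\nu)/c(\nu)=\prod_j j^{-l_j(\nu)}$ cancelling $j^{l_j/2}$, and the $r=2$ variance computation) are right; and your Gaussian weight $e^{-x^2/2}/\sqrt{2\pi}$ is the correct one for probabilist's Hermite polynomials, whereas the paper's displayed weight $e^{-x^2}/\sqrt{2\pi}$ appears to be a typo.
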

Theorem \ref{cltplancherel} is a classical result in Kerov\cite{Kerov93gaussianlimit} and Hora\cite{Hora1998}. Both of these proofs establish asymptotic normality by the
method of moments and use combinatorial methods to estimate the moments. Hora first proves the second half of the theorem.

\section{Conjugacy Measure}

\begin{theorem}
\label{clt_main}
Take the conjugacy measure $\mu_{\psi}$ on the irreducible representation of the symmetric group $S_n$, which is on $\lambda\vdash n$. Given any $\nu$ with $n(\nu)<n$ and $l_1(\nu)=0$, the random variable 
\begin{equation}
W_{n,\nu}(\lambda)= \frac { [n]_{n(\nu)} \times \chi^{\lambda}(\nu)\sqrt{d(\nu)}}{f^\lambda \times n^{n(\nu)/2}\sqrt{c(\nu)} }
\end{equation}
is asymptotically normal with mean 0 and variance 1 if $\nu$ satisfies the condition $l(\nu)=1$ and $n(\nu)>1$. For any $\nu^{(1)},\nu^{(2)},\cdots, \nu^{(m)}$ partitions with any $n(\nu^{(i)})<n$ and $l_1(\nu^{(i)})=0$ and any $k_1,k_2, \cdots k_m\in \mathrm{N}$, we have
\begin{equation}
    \lim_{n\to\infty}E[W_{n,\nu^{(1)}}^{k_1} W_{n,\nu^{(2)}}^{k_2}\cdots W_{n,\nu^{(m)}}^{k_m}]=\prod_{j\geq 2} \int_{-\infty}^\infty \frac{e^{-x^2}}{\sqrt{2\pi}} (\mathcal{H}_{l_j(\nu^{(1)})}(x))^{k_1} (\mathcal{H}_{l_j(\nu^{(2)})}(x))^{k_2} \cdots (\mathcal{H}_{l_j(\nu^{(m)})}(x))^{k_m} dx
\end{equation}
\end{theorem}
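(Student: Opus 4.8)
The plan is to realize the conjugacy measure as a reweighting of the Plancherel measure and to show that the reweighting is asymptotically invisible to the scaled character ratios, so that Theorem \ref{cltplancherel} transfers verbatim. Set $R(\lambda)=m(S^\lambda,\psi)/f^\lambda=\sum_{K\in\hat S_n}\chi^\lambda(K)/f^\lambda$, so that $\mu_\psi(\lambda)=R(\lambda)\,\mu_\phi(\lambda)$ by the two multiplicity formulas of Subsection 2.1, and introduce the normalized character $\Sigma_\nu(\lambda)=[n]_{n(\nu)}\chi^\lambda(\nu)/f^\lambda$, so that $W_{n,\nu}=\Sigma_\nu\cdot\sqrt{d(\nu)}\,/\bigl(n^{n(\nu)/2}\sqrt{c(\nu)}\bigr)$. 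Indexing conjugacy classes by partitions $\rho$ with no part equal to $1$, we have $R=\sum_{\rho}\Sigma_\rho/[n]_{n(\rho)}$, the term $\rho=\emptyset$ contributing $1$. The first observation is the exact identity $E_{\phi}[R]=\sum_\lambda\mu_\psi(\lambda)=1$; together with $E_{\psi}[F]=E_{\phi}[RF]$ for $F=\prod_i W_{n,\nu^{(i)}}^{k_i}$ this yields the clean reduction
\begin{equation}
E_{\psi}[F]-E_{\phi}[F]=\mathrm{Cov}_{\phi}(R,F).
\end{equation}
Since $E_\phi[F]$ already converges to the stated product of Gaussian--Hermite integrals by Theorem \ref{cltplancherel}, the entire theorem follows once I show $\mathrm{Cov}_\phi(R,F)\to 0$.

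Next I would expand the covariance over conjugacy classes. The second orthogonality relation for characters gives $E_\phi[\Sigma_\rho]=[n]_{n(\rho)}\,\mathbf{1}\{\rho=\emptyset\}$, so the $\rho=\emptyset$ term drops and
\begin{equation}
\mathrm{Cov}_\phi(R,F)=\sum_{\rho\neq\emptyset}\frac{1}{[n]_{n(\rho)}}\,E_\phi\!\left[\Sigma_\rho\,F\right].
\end{equation}
To analyze $E_\phi[\Sigma_\rho F]$ I would pass to the algebra of partial permutations (equivalently, use the class-algebra structure constants), writing $\Sigma_\rho\prod_i\Sigma_{\nu^{(i)}}^{k_i}=\sum_\tau g^\tau\,\Sigma_\tau$ with nonnegative integer coefficients $g^\tau$ and top term the disjoint union $\rho\sqcup\bigsqcup_i(\nu^{(i)})^{k_i}$. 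Applying $E_\phi$ and the orthogonality identity selects exactly the terms $\tau=(1^j)$ supported on fixed points, so $E_\phi\bigl[\Sigma_\rho\prod_i\Sigma_{\nu^{(i)}}^{k_i}\bigr]=\sum_j g^{(1^j)}[n]_j$, a finite sum of falling factorials.

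The heart of the argument, and the step I expect to be hardest, is the asymptotic control of these covariances. Writing $D=\sum_i k_i\,n(\nu^{(i)})$, the deterministic prefactor of $F$ is of order $n^{-D/2}$, and I must show each summand $[n]_{n(\rho)}^{-1}E_\phi[\Sigma_\rho F]$ tends to $0$ with enough uniformity to sum over $\rho$. The mechanism is twofold. First, a support bound from the subadditivity of reflection length in products of partial permutations shows that a pure fixed-point term $\Sigma_{(1^j)}$ can appear in $\Sigma_\rho\prod_i\Sigma_{\nu^{(i)}}^{k_i}$ only when the support of $\rho$ is entirely absorbed by the supports of the $\nu^{(i)}$, forcing $n(\rho)\le D$; hence only finitely many $\rho$ contribute. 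Second, the Plancherel estimates underlying Theorem \ref{cltplancherel} should give $E_\phi\bigl[\Sigma_\rho\prod_i\Sigma_{\nu^{(i)}}^{k_i}\bigr]=O\!\bigl(n^{(n(\rho)+D)/2}\bigr)$, whence each summand is $O\!\bigl(n^{-D/2}\cdot n^{(n(\rho)+D)/2}\cdot n^{-n(\rho)}\bigr)=O\!\bigl(n^{-n(\rho)/2}\bigr)\to 0$ for $\rho\neq\emptyset$. Pinning down the exact exponent in this last estimate --- that the covariance of $\Sigma_\rho$ with the degree-$D$ monomial in the $\Sigma_{\nu^{(i)}}$ cannot exceed the ``fully paired'' order $n^{(n(\rho)+D)/2}$ --- is where I expect the real difficulty to lie, since it requires tracking how the structure constants $g^\tau$ create fixed points, not merely their leading weight $n(\tau)+\ell(\tau)$; the naive weight filtration is too coarse for this.

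Finally I would assemble the pieces. The two displays combine to give $E_\psi[F]\to\prod_{j\ge 2}\int_{-\infty}^{\infty}\frac{e^{-x^2}}{\sqrt{2\pi}}\prod_i(\mathcal{H}_{l_j(\nu^{(i)})}(x))^{k_i}\,dx$, the same limit as in Theorem \ref{cltplancherel}. Because this limiting moment sequence is that of a fixed family of independent Gaussian--Hermite variables, which (as in the Plancherel case) is determined by its moments, the method of moments yields convergence in distribution; specializing to a single $\nu$ with $l(\nu)=1$ and $n(\nu)>1$ recovers the asserted asymptotic normality of $W_{n,\nu}$ with mean $0$ and variance $1$.
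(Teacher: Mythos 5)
Your reduction is sound and in fact mirrors the skeleton of the paper's own argument: writing $\mu_\psi=R\,\mu_\phi$ with $R(\lambda)=\sum_K\chi^\lambda(K)/f^\lambda$ and splitting $E_\psi[F]=E_\phi[F]+\sum_{\rho\neq\emptyset}[n]_{n(\rho)}^{-1}E_\phi[\Sigma_\rho F]$ is exactly the paper's decomposition of the moment into the contribution of the identity conjugacy class (which reproduces the Plancherel limit of Theorem \ref{cltplancherel}) plus the sum over nontrivial classes, i.e.\ the paper's statement \eqref{ll1}. Your support observation that only $\rho$ with $n(\rho)\le D$ can contribute (where $D=\sum_i k_i\,n(\nu^{(i)})$ in your notation) is also correct and appears in the paper as the remark that $l_1(p(\omega))\geq n-n(\nu)k$. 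The problem is that the one step you defer --- the estimate $E_\phi\bigl[\Sigma_\rho\prod_i\Sigma_{\nu^{(i)}}^{k_i}\bigr]=O\bigl(n^{(n(\rho)+D)/2}\bigr)$ --- is precisely the new content of the theorem relative to the Plancherel case, and you explicitly concede that you do not know how to establish it (``the naive weight filtration is too coarse''). Without that bound the covariance term is not controlled and nothing beyond Theorem \ref{cltplancherel} has been proved.

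The paper closes exactly this gap by a combinatorial route that your proposal does not supply. Lemma \ref{keylemma} (Stanley, Ex.\ 7.67) converts the quantities $[n]_{n(\rho)}^{-1}E_\phi\bigl[\Sigma_\rho\prod_i\Sigma_{\nu^{(i)}}^{k_i}\bigr]$ into counts of tuples of permutations with prescribed cycle types whose product lies in a prescribed class $\delta$; grouping tuples by their ``symbol'' $\mu$ (the multiset of multiplicities with which ground-set elements occur in the tuple) isolates a power of $n$ with exponent $l_1(\delta)-n(\nu)k+\sum_j l_j(\mu)-n(\nu)k/2$, and the two inequalities \eqref{l6} and \eqref{l7}, together with the observation that they cannot simultaneously be equalities when $\delta$ is a nontrivial class, force this exponent down to $-1/2$. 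That integrality-and-parity argument is what rules out the borderline ``fully paired'' configurations you worry about, and it is the real work of the proof. Your outline would be complete if you supplied an argument of this kind (or an equivalent sharpening of the partial-permutation filtration that tracks fixed-point creation, which, as you yourself note, the leading weight of the structure constants does not give you).
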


Before starting the proof of Theorem \ref{clt_main}, we first provide new definitions and two results that are useful in the proof.

\begin{definition}
Let  $\omega=(\omega_1,\omega_2,...\omega_k)$ be a k-tuple with $\omega_i$ as a cycle notation in $S_n$. For any non-negative integer j, let $b_j$ denote the number of times j appears in $\omega$. Arrange the counts $\{b_j\}'s $ in decreasing order and remove any trailing zeros. The resulting sequence is the symbol of $\omega$, denoted as $\overline{\omega}$. The product of $\omega, p(\omega)$ denotes the partition that corresponds to the cycle type of product of $\omega_i$'s. Here is an example of $p(\omega)$. 
\end{definition}

\begin{example}
For a transposition 4-tuple, $((1,2)(1,3)(5,6)(1,6))$ in $S_8$, the symbol is (3,2,1,1). The product is $(12365)$. $p(\omega)=\{5,1,1,1\}$.
\end{example}

\begin{lemma}
\label{keylemma}
(\cite{stanley} Ex 7.67) Let G be a finite group with conjugacy classes $C_1,C_2,..C_r$. Let $C_k$ be the conjugacy class of an element $\omega\in G$ Then the number of m-tuples $(q_1,q_2,..q_m)\in G^m$such that $g_j\in C_{i_j}$ and $g_1...g_m=\omega$ is \[\frac{\prod_{j=1}^m |C_{i_j}|}{|G|} \sum_{\chi\in Irr(G)} \frac{1}{dim(\chi)^{m-1}} \chi(C_{i_1})\cdots\chi(C_{i_m})\overline{\chi(C_k)}\]
\end{lemma}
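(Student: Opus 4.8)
\section*{Proof proposal for Lemma \ref{keylemma}}

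The plan is to realize the count as a structure constant in the center of the group algebra $\mathbb{C}[G]$ and then to diagonalize it via the central characters. For each conjugacy class $C_i$ write the class sum $\widehat{C}_i = \sum_{g \in C_i} g \in \mathbb{C}[G]$; these span the center $Z(\mathbb{C}[G])$. The key observation is that the coefficient of a fixed group element $\omega$ in the product $\widehat{C}_{i_1}\cdots \widehat{C}_{i_m}$ is precisely the number $N$ of $m$-tuples $(g_1,\dots,g_m)$ with $g_j \in C_{i_j}$ and $g_1\cdots g_m = \omega$. Since this product is central, its coefficients are constant on conjugacy classes, so we may expand $\widehat{C}_{i_1}\cdots\widehat{C}_{i_m} = \sum_k N_k \widehat{C}_k$, where $N_k$ is exactly the quantity we want to count for $\omega \in C_k$.

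To evaluate $N_k$, I would diagonalize the action of the center on each irreducible module $V_\chi$. By Schur's lemma each class sum acts as a scalar, namely the central character $\omega_\chi(C_i) = |C_i|\chi(C_i)/\dim(\chi)$, so on $V_\chi$ the product $\widehat{C}_{i_1}\cdots\widehat{C}_{i_m}$ acts as the scalar $\prod_{j=1}^m \omega_\chi(C_{i_j})$. Taking the trace on $V_\chi$ of the identity $\widehat{C}_{i_1}\cdots\widehat{C}_{i_m} = \sum_k N_k \widehat{C}_k$, and using $\mathrm{tr}_{V_\chi}(\widehat{C}_k) = |C_k|\chi(C_k)$ together with $\mathrm{tr}_{V_\chi}\bigl(\prod_j \widehat{C}_{i_j}\bigr) = \dim(\chi)\prod_j \omega_\chi(C_{i_j})$, gives for every $\chi \in \mathrm{Irr}(G)$
\[
\sum_k N_k\,|C_k|\,\chi(C_k) \;=\; \frac{\prod_{j=1}^m |C_{i_j}|\,\chi(C_{i_j})}{\dim(\chi)^{m-1}}.
\]

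The final step is to solve this linear system for a single $N_k$, which is where column orthogonality of the character table does the work: multiplying both sides by $\overline{\chi(C_k)}$, summing over $\chi \in \mathrm{Irr}(G)$, and invoking $\sum_{\chi}\chi(C_{k'})\,\overline{\chi(C_k)} = \tfrac{|G|}{|C_k|}\,\delta_{k,k'}$ collapses the left-hand side to $N_k\,|G|$. Dividing by $|G|$ then yields exactly the claimed formula. I expect the only genuinely delicate point to be the scalar-action claim: verifying that each class sum acts by the stated central character requires a clean application of Schur's lemma (the class sum is central, hence commutes with the whole $G$-action, hence is a scalar on the irreducible $V_\chi$) pinned down by the trace computation $\dim(\chi)\,\omega_\chi(C_i) = \mathrm{tr}_{V_\chi}(\widehat{C}_i) = |C_i|\chi(C_i)$. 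Once this is in hand the remainder is bookkeeping with the orthogonality relations, so I would devote most of the care to making the central-character identification and the column-orthogonality normalization explicit.
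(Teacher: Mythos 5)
Your proposal is correct and complete. Note that the paper does not prove this lemma at all---it is quoted from Stanley (Ex.~7.67) as a known result---and your argument is exactly the standard derivation of this classical Frobenius-type formula: extracting the count $N_k$ as a structure constant of the class sums $\widehat{C}_{i_1}\cdots\widehat{C}_{i_m}=\sum_k N_k\widehat{C}_k$ in $Z(\mathbb{C}[G])$, identifying the scalar action $\omega_\chi(C_i)=|C_i|\chi(C_i)/\dim(\chi)$ via Schur's lemma pinned down by the trace, and solving the resulting linear system by column orthogonality with the correct normalization $\sum_\chi \chi(C_{k'})\overline{\chi(C_k)}=\frac{|G|}{|C_k|}\delta_{k,k'}$. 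Every step checks out, so your write-up would serve as a legitimate self-contained replacement for the citation.
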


\begin{definition}
 Let $\omega=(\omega_1,\omega_2,...\omega_k)$ be a k-tuple with $\omega_i$ as a cycle notation in $S_n$. For each $\omega_i$, let $\omega_i^{1}\omega_i^{2}\cdots \omega_i^{r_i}$ be the cycle notation of $\omega_i$. Assume that $\overline{\omega}=\{q,2,\cdots, 2,1\cdots,1\}$ with $q\geq 2$, 
 we denote the element that corresponds to $q$ as a i(if the maximum of $\overline{\omega}$ is 2, we pick any element that appears twice as a). We write $\omega_{i_{1}}^{j_1},\cdots \omega_{i_{q}}^{j_q}$ with $i_i<i_2<\cdots<i<q$  as cycles containing element $a$, and for each cycle, we rearrange them in the form that the element $a$ is in the first position. 
 We denote $\omega_{i}^j[p]$ as the p-th element in the cycle $\omega_{i}^j$ and specifically $\omega_{i}^j[-1]$ as the last element. If there exists t such that $\omega_{i_t}^{j_t}[-1]=\omega_{i_{t+1}}^{j_{t+1}}[2]$ or $\omega_{i_t}^{j_t}[2]=\omega_{i_{t+1}}^{j_{t+1}}[-1]$, we delete the two equal element from the tuple. After deletion, if some cycle turns to a singleton, we remove this cycle from the tuple. In this case, when $t=q$, we take $\omega_{i_{q+1}}^{j_{q+1}}$ as $\omega_{i_1}^{j_1}.$  We repeat this procedure until either the symbol of current tuple does not have element greater than one or there is no deletion available. We call a tuple after the entire procedure reduced tuple.\label{algorithm}
\end{definition}
\begin{proposition}
 Let $\omega=(\omega_1,\omega_2,...\omega_k)$ be a k-tuple with each $\omega_i$ as a cycle notation in $S_n$. Assume that $\overline{\omega}=\{i,2,\cdots, 2,1,\cdots,1\}$ with $i\geq 2$, we have $p(\omega)=id$ if and only if there exists a reduced tuple $\tilde{\omega}$ of $\omega$ is $\emptyset$.
\label{algprop}
\end{proposition}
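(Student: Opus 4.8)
The plan is to prove both implications by induction on the total number of element‑occurrences $\sum_j b_j$ of $\omega$, the engine being the decomposition of every cycle through $a$ into transpositions based at $a$: a normalized cycle $\omega_i^j=(a,\beta_1,\dots,\beta_p)$ factors as a product of the transpositions $(a,\beta_1),\dots,(a,\beta_p)$, in which the factor carrying $\omega_i^j[2]=\beta_1$ sits at one end and the factor carrying $\omega_i^j[-1]=\beta_p$ sits at the other. A cycle not containing $a$ is disjoint from $a$, and this is exactly where the hypothesis $\overline{\omega}=\{i,2,\dots,2,1,\dots,1\}$ is used: any repeated element $x\neq a$ occurs in precisely two cycles. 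Hence, once the two cycles carrying a matched value $x$ are singled out, no other cycle — in particular none lying between them in the product — contains $a$ or $x$, so every intervening factor commutes with the transposition $(a,x)$.

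The key lemma I would establish first is that a single deletion of Definition \ref{algorithm} leaves $p(\omega)$ unchanged. When $\omega_{i_t}^{j_t}$ and $\omega_{i_{t+1}}^{j_{t+1}}$ are matched on the value $x$, these two cycles contribute two copies of the transposition $(a,x)$ to the word; commuting the intervening ($a$‑ and $x$‑free) factors past one copy makes the two copies adjacent, and $(a,x)(a,x)=\mathrm{id}$ cancels precisely the pair that the procedure removes. Collapsing a cycle that has degenerated to the singleton $(a)$ is then just the deletion of a now‑trivial factor. Consequently the cycle type of the product is preserved by each step, and since the empty tuple has product $\mathrm{id}$, reverse induction along a run that terminates in $\emptyset$ gives the backward implication: if some reduced tuple of $\omega$ is $\emptyset$, then $p(\omega)=\mathrm{id}$.

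For the forward implication, assume $p(\omega)=\mathrm{id}$ and the tuple is nonempty. If no element repeats, the cycles are pairwise disjoint and their product is a nontrivial permutation, contradicting $p(\omega)=\mathrm{id}$; hence a repeated element, and therefore the distinguished element $a$, exists. The crux of the argument — and the step I expect to be the main obstacle — is to show that whenever $p(\omega)=\mathrm{id}$ some cyclically consecutive pair of $a$‑cycles actually admits a deletion, i.e. satisfies $\omega_{i_t}^{j_t}[-1]=\omega_{i_{t+1}}^{j_{t+1}}[2]$ or $\omega_{i_t}^{j_t}[2]=\omega_{i_{t+1}}^{j_{t+1}}[-1]$. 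I would prove this by tracing the orbit of $a$ through the product: since $a$ returns to itself, following the $a$‑strand through the interleaved transpositions $(a,\cdot)$ and the $a$‑free blocks must produce an innermost closed return, and the absence of nesting at that return forces one of the two matching equalities. One then applies the deletion; by the key lemma this preserves $p(\omega)=\mathrm{id}$, and since deletions only lower multiplicities it preserves the symbol shape $\{i,2,\dots,2,1,\dots,1\}$ as well, so the induction hypothesis reduces the smaller tuple to $\emptyset$.

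I expect the genuine difficulty to lie entirely in the combinatorics of that existence statement, together with the bookkeeping that surrounds it: correctly handling the intervening blocks when $\omega_i$ carries several disjoint cycles, the cyclic convention by which the pair $(\omega_{i_q}^{j_q},\omega_{i_1}^{j_1})$ is treated as consecutive, and the freedom to re‑choose the base element $a$ once the maximal multiplicity has dropped to $2$. The orbit‑of‑$a$ analysis is what ties these together, and making it rigorous — in particular arguing that an innermost return cannot be obstructed by the blocks, which are disjoint from $a$ — is the heart of the proof; once it is in place, both implications follow from the product‑preservation lemma by the inductions sketched above.
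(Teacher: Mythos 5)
Your overall architecture (a product-preservation lemma for single deletions, plus an existence statement for deletions whenever $p(\omega)=\mathrm{id}$, tied together by induction) matches the paper's, but both load-bearing steps have gaps. First, your cancellation argument for the key lemma only covers the deletion case $\omega_{i_t}^{j_t}[-1]=\omega_{i_{t+1}}^{j_{t+1}}[2]$: there the two copies of $(a,x)$ are the last factor of the decomposition of $\omega_{i_t}^{j_t}$ and the first factor of that of $\omega_{i_{t+1}}^{j_{t+1}}$, so only $a$-free and $x$-free material separates them and they cancel. In the other case $\omega_{i_t}^{j_t}[2]=\omega_{i_{t+1}}^{j_{t+1}}[-1]$, the two copies of $(a,x)$ sit at the \emph{outer} ends of the two decompositions, and the word between them contains all the remaining transpositions $(a,\cdot)$ of those same two cycles, which do not commute with $(a,x)$; your claim that the intervening factors are ``$a$- and $x$-free'' is false there. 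This is not cosmetic: deleting the outer pair replaces the enclosed word $M$ by its conjugate $(a,x)M(a,x)$, and when there are further cycles through $a$ outside the pair this can change the cycle type of the product. For instance $\omega=((a,x),(a,y,x),(a,y))$ has $\overline{\omega}=(3,2,2)$ and product equal to the $3$-cycle $(a,y,x)$, yet $\omega_{1}[2]=\omega_{2}[-1]=x$ triggers such a deletion and the tuple then reduces to $\emptyset$. So the second deletion type must either be restricted to the cyclic wrap-around position (where it coincides with the first type and your commuting argument does apply) or be handled by a genuinely different argument; the paper merely asserts preservation, so you cannot lean on it either.

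Second, the forward implication is the entire content of Proposition~\ref{algprop}, and your proposal leaves it at the level of ``tracing the $a$-strand must produce an innermost closed return that forces one of the matching equalities.'' That sentence is the statement to be proved, not a proof of it. The paper's argument is precisely the rigorous version of this trace: it assumes a nonempty tuple on which no deletion is available, writes the word as $L_1\omega_{i_1}^{j_1}L_2\cdots L_q\omega_{i_q}^{j_q}L_{q+1}$, and shows by bookkeeping over the blocks $L_t$ that the elements $b_{t,0},\dots,b_{t,y_t}$ of each $a$-cycle are forced into ever smaller portions of the word, until $b_{q-1,y_{q-1}}$ cannot return to itself, contradicting $p(\omega)=\mathrm{id}$. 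You would need to supply an argument of comparable detail, and it must produce specifically a deletion of the type your preservation lemma actually covers, since after a deletion of the second type the hypothesis $p(\omega)=\mathrm{id}$ need not propagate to the smaller tuple and your induction breaks. As it stands the proposal identifies the right mechanism but does not establish the proposition.
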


\begin{proof}
 Notice that for each pair of elements deleted by $\omega_{i_t}^{j_t}[-1]=\omega_{i_{t+1}}^{j_{t+1}}[2]$ or $\omega_{i_t}^{j_t}[2]=\omega_{i_{t+1}}^{j_{t+1}}[-1]$ does not appear in other places in $\omega$. The procedure in Definition $\ref{algorithm}$ preserves the product of $\omega$ and the symbol $\{q',2,2,\cdots,2,1\cdots 1\}$ with $q-q'geq 0$. Thus, if a reduced tuple of $\omega$ is $\emptyset$, the product $p(\omega)=id$.

 Next, we prove if any reduced tuple of $\omega$ is nonempty and the symbol of $\omega$ is not identity, there is a contradiction. Without loss of generality, we take a $\tilde{\omega}$ as $\omega$. Therefore, all notations in Definition $\ref{algorithm}$ follow in this proof. Denote $\omega =L_1\omega_{i_{1}}^{j_1}L_2 \omega_{i_{2}}^{j_2} L_3 \cdots L_q\omega_{i_{q}}^{j_q}L_{q+1}.$ Since $p(\omega)=id$, we have $l_1(\overline{\omega}=0.$ We rewrite $\omega =\omega_{i_{1}}^{j_1}L_2 \omega_{i_{2}}^{j_2} L_3 \cdots L_q\omega_{i_{q}}^{j_q}L_{q+1}L_{1}^{-1},$ with $p(\omega)=id$. Rewrite $L_{q+1}=L_{q+1}L_{1}^{-1}.$ Denote $\omega_{i_{t}}^{j_t}=(a,b_{t,0},b_{t,1},\cdots,b_{t,y_t})$. 
The element $b_{1,y_1}$ goes to $a$ in $\omega_{i_{1}}^{j_1}$, and needs to be mapped back to itself after $L_{q}$. If the element $b_{1,y_1}$ appears in $L_1$, by the fact $b_{0,y_0}$ only appearing twice,  it cannot be mapped back after $L_{q}$. Since $\omega$ is already of the reduced form, even if $b_{1,y_1}$ is in $\omega_{i_{2}}^{j_2}$, it cannot be mapped back after $L_{q}$. Thus, $b_{1,y_1}$ appears in $L_2\omega_{i_3}^{j_3}\cdots L_{q}$.
The element $a$ goes to $b_{1,0}$ in $\omega_{i_{1}}^{j_1}$, and needs to be mapped back to $a$ after $L_{q}$. If the element $b_{1,0}$ appears in $L_q$, by the fact it only appearing twice, it cannot be mapped back after $L_{q}$. Since $\omega$ is already of the reduced form, even if $b_{1,0}$ is in $\omega_{i_{q}}^{j_q}$, it cannot be mapped back to $a$ after $L_{q}$. Thus, $b_{1,0}$ appears in $L_1\omega_{i_2}^{j_2}\cdots L_{q-1}$.
We call an element $g$ in $\omega_{i}^{j}$ appears before $f$ in $\omega_{i'}^{j'}$  if $i\leq i'\text{ and } j\leq j'$.  
The element $b_{1,0}$ goes to $b_{1,1}$ in $\omega_{i_{1}}^{j_1}$, and thus $b_{1,1}$ appears before the second $b_{1,0}$. Therefore, all elements $b_{1,i}$ appear in $L_1\omega_{i_2}^{j_2}\cdots L_{q-1}$, with the order of $b_{1,y_1}, b_{1,y_1-1},\cdots, b_{1,0}$. Recall $b_{1,y_1}$ appearing in $L_2\omega_{i_3}^{j_3}\cdots L_{q}$, all $b_{1,i}$ appear in  $L_2\omega_{i_3}^{j_3}\cdots L_{q-1}.$

Revisiting $b_{1,y_1}$, we have $b_{1,y_1}$ goes to $a$ in $\omega_{i_{1}}^{j_1}$, and then goes to  $b_{2,0}$ in $\omega_{i_{2}}^{j_2}.$ Similar to the argument for all elements $b_{2,i}$ appear in $L_1\omega_{i_2}^{j_2}\cdots L_{q-1}$ ($b_{1,y_1}$ going to $a$ then to $b_{2,0}$ needs to be back to $b_{1,y_1}$), $b_{2,0}$ is in $L_2\omega_{i_3}^{j_3}\cdots L_{q-1}$. Thus, all $b_{2,i}$ appears in  $L_2\omega_{i_3}^{j_3}\cdots L_{q-1}.$ For $b_{2,y_2}$, similar to $b_{1,y_1}$ in $L_2\omega_{i_3}^{j_3}\cdots L_{q}$ ($b_{1,y_1}$ cannot be mapped to itself if it's in $L_1$ or $\omega_{i_{2}}^{j_2}$), $b_{2,y_2}$ appears in $L_3\omega_{i_4}^{j_4}\cdots L_{q}$. Now all $b_{2,i}$ appear in  $L_3\omega_{i_4}^{j_4}\cdots L_{q-1}.$

Redo this analysis until $b_{q-1,0}.$ We have all $b_{q-1,i}$ appearing in $L_{q-1}$
$b_{q-1,y_{q-1}}$ maps to $a$ and cannot be mapped back to $b_{q-1,y_{q-1}}$ (it is in $L_{q-1}$). We reach a contradiction.
 
\end{proof}
\begin{lemma}
Assume that $\overline{\omega}=\{2^q\}$, from Proposition \ref{algprop}, the product of $\omega$ is identity if and only given any $i,j$,  there exists a unique $i',j'$ such that  $i\neq i'$with $\omega_{i}^j\omega_{i'}^{j'}=id$.\label{keyobserv}
\end{lemma}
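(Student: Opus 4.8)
The plan is to prove both directions through the reduced-tuple criterion supplied by Proposition~\ref{algprop}: since $\overline{\omega}=\{2^q\}$ we have $p(\omega)=\mathrm{id}$ if and only if some reduced tuple of $\omega$ is empty, so it suffices to match emptiness of a reduced tuple with existence of the claimed inverse pairing of cycles. The structural fact I will use throughout is that $\overline{\omega}=\{2^q\}$ means every letter occurring in $\omega$ lies in exactly two of the cycles $\omega_i^j$, and that the product $p(\omega)$ is the ordered product of all these cycles.

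For the reverse implication, suppose each cycle $\omega_i^j$ has a unique partner $\omega_{i'}^{j'}$ with $i\neq i'$ and $\omega_i^j\omega_{i'}^{j'}=\mathrm{id}$, i.e. $\omega_{i'}^{j'}=(\omega_i^j)^{-1}$. A cycle and its inverse have the same support, and since every letter appears exactly twice, each letter lies in exactly one such pair; hence distinct pairs have disjoint support and therefore commute. Writing $\omega_i^j=(a,b_0,\dots,b_y)$ and its partner as $(a,b_y,\dots,b_0)$, the reduction of Definition~\ref{algorithm} applied at $a$ satisfies $\omega_i^j[-1]=b_y=(\omega_i^j)^{-1}[2]$, so it cancels $b_y$, then $b_{y-1}$, and so on, shrinking the pair to two singletons $(a)$ that are removed. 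Because distinct pairs have disjoint support these cancellations never interfere, so iterating over all pairs empties the tuple; by Proposition~\ref{algprop}, $p(\omega)=\mathrm{id}$. (The same disjointness gives the quick direct check that the ordered product is $\mathrm{id}$, since each pair can be slid together to cancel.)

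For the forward implication I would argue by strong induction on $q$. Assume $p(\omega)=\mathrm{id}$, so by Proposition~\ref{algprop} some reduction empties $\omega$. The key step is to extract from an emptying reduction a pair of cycles $\omega_i^j,\omega_{i'}^{j'}$ with $i\neq i'$ that are exact inverses. Granting this, every letter of $\omega_i^j$ occurs once in $\omega_i^j$ and once in its inverse, so by the multiplicity-two property these letters appear in no other cycle; thus this pair has support disjoint from all remaining cycles and commutes with them. Removing it leaves a tuple $\omega'$ with $\overline{\omega'}=\{2^{q'}\}$ and, after sliding the removed pair together to cancel, with $p(\omega')=\mathrm{id}$. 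By induction $\omega'$ is perfectly inverse-paired, and reinstating the removed pair gives the pairing for $\omega$. Uniqueness and the condition $i\neq i'$ then follow automatically: a third cycle equal to $(\omega_i^j)^{\pm1}$ would force some letter to appear at least three times, and two inverse cycles of length $\ge 2$ cannot sit inside a single $\omega_i$ since the cycles of a permutation are disjoint.

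The hard part is the extraction step in the forward implication, namely showing that an emptying reduction of a $\{2^q\}$ tuple must expose an exact inverse pair. The subtlety is that a single deletion in Definition~\ref{algorithm} removes only one shared letter, and a singleton collapse can momentarily turn a $\{2^q\}$ symbol into one containing a $1$, so the pure $\{2^q\}$ form is not literally preserved step by step. I plan to control this by noting that the reduction only shrinks cycles and never merges them, so each letter is deleted against the \emph{unique} other cycle containing it: when the predecessor $b_y$ of the chosen letter $a$ in $\omega_i^j$ is cancelled because $\omega_i^j[-1]=C_2[2]$, the cycle $C_2$ is forced to be the unique other host of $b_y$ and also to contain $a$, and continuing the cancellation at $a$ forces $C_2$ to be the reversal of $\omega_i^j$, i.e. $(\omega_i^j)^{-1}$. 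Making this forcing precise, and checking that the transient $1$'s created by singleton collapses are exactly the letters being matched into such a pair, is the crux; this mirrors the ``mapped-back'' analysis already used in the proof of Proposition~\ref{algprop}.
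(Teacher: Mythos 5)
Your reverse implication is sound and is essentially the paper's: inverse-paired cycles exhaust both occurrences of each of their letters, so distinct pairs have disjoint supports, commute, and cancel (the paper phrases this via $\sigma(a_1,\dots,a_i)\sigma^{-1}=(\sigma(a_1),\dots,\sigma(a_i))$ and the observation that the intervening cycles are fixed under conjugation by $\omega_1$, which is the same disjointness fact). Your uniqueness and $i\neq i'$ remarks are also fine.

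The forward implication, however, has a genuine gap, and it is exactly the step you yourself flag as ``the crux'': the claim that an emptying reduction of a $\{2^q\}$ tuple must expose an exact inverse pair $\omega_i^j,(\omega_i^j)^{-1}$. Your forcing sketch implicitly assumes that the two cycles containing the chosen letter $a$ cancel \emph{entirely against each other} through deletions anchored at $a$; only under that assumption does repeated matching of $\omega_i^j[-1]$ with $C_2[2]$ force $C_2$ to be the full reversal of $\omega_i^j$. But Definition~\ref{algorithm} does not guarantee this: each deletion removes a single shared letter, after which the procedure may re-select a different letter of multiplicity two, so $\omega_i^j$ can subsequently be shrunk against a third cycle through a different shared letter, and the cycle that eventually annihilates its last entries need not be the one that began cancelling it. Moreover, singleton removal creates transient multiplicity-one letters, which interacts with the procedure's stopping condition. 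Until you prove that some emptying reduction can be reorganized so that one inverse pair cancels completely before anything else touches it (or prove the forcing claim in the presence of interleaving), your induction on $q$ does not start. Note that the paper avoids this difficulty by running the recursion in the opposite direction: it reconstructs $\omega$ from the empty tuple by inverting the reduction moves and inducts on the number of reconstruction steps, checking that each of the two possible reverse moves (inserting a new letter into the two cycles containing a given letter, or adjoining a transposition twice) preserves the perfect inverse pairing. If you want to keep your removal-based induction, the missing lemma about extraction is what you must supply; otherwise, switching to the paper's backward reconstruction closes the gap more cheaply.
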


\begin{proof}
This Lemma was first proven in \cite{Hora1998}. In this paper, we give an alternative approach, with the help of Proposition \ref{algprop}. Since $p(\omega)=id$, a reduced tuple of $\omega$ is the empty tuple. Therefore, we retrieve the original $\omega$ from the empty tuple by following the procedure backward. Since for any $\omega$ with $p(\omega)=id$, the symbol of $\omega$ does not have component of one ($l_1(\overline{{\omega}})=0$). Therefore, for $\omega$ with symbol comprised of 2, for each procedure, deleted elements must be in pair. Tuples that lead to a empty tuple one step before are of the shape$((a,b)(a,b)).$  Then we present a proof by induction on the number of steps before empty set: Tuples that leads to empty tuples n steps before have the property that each cycle's inverse exists uniquely thus forming a pair in the tuple. Take a tuple $\omega=(\omega_1,\omega_2,...\omega_q)$, for each $i$, there exists a unique $j$ such that $i\neq j$ with $\omega_i=\omega_j^{-1}.$ We assume the statement for step n works. For step n+1, given tuple $\omega=(\omega_1,\omega_2,...\omega_q)$, it can be modified as follows: 1. Pick any element $q$ appearing in $\omega$ and insert a new element $t$ in the two cycles containing $q$ under the rule of the procedure. 2. Pick two distinct elements $q,t$ not in $\omega$ and add $(q,t)$ into $\omega$ twice. We notice that the second method of changing tuple still preserve the inductive statement. For the first method, we write the two cycles containing $q$ as $\omega_i,\omega_j (i<j)$ with the first element of both being $q.$ Then inserting $t$ either into the second position of $\omega_i$ and the last position of $\omega_j$ or vice versa preserves the relation$\omega_i=\omega_j^{-1}$ on the updated $\omega_i,\omega_j$.  Therefore, any $\omega$ with a empty reduced tuple has the the pairing property $\omega_i=\omega_j^{-1}.$

For the other direction, a first observation is the number of cycles in $\omega$ is even. Given $\omega$ of the above property, with loss of  generosity, $\omega_1=\omega_j^{-1},\omega_1\omega_2...\omega_{2k}=\omega_1(\omega_2)\times\omega_1(\omega_3)\times...\times\omega_1(\omega_{j-1})\times \omega_{j+1}\times..\omega_{2k}$.It is due to a fundamental result in symmetric group: $\sigma(a_1,...a_i)\sigma^{-1}=(\sigma(a_1),\sigma(a_2),...\sigma(a_i))$. Note $\overline{\omega}=\{2,,,2\}$, each element in $\omega_{i},\text{ except for } i= 1,j, $ does not appear in $\omega_1$. Thus each $\omega_i$ is fixed under the $\omega_1$. Explicitly, we notice $\omega_1\omega_2...\omega_{2k}=\omega_2\omega_3...\omega_{j-1}\omega_{j+1}...\omega_{2k}$. Redo this for k times shows that product of $\omega$ is the identity.

\end{proof}
\begin{proof}
By the statement of method of moment, for the first half of the Theorem, it suffice to show that moments of any order for W is finite and moments converge to the respective moments of Gaussian variable. Explicitly, \[\lim_{n\to\infty}E[W_{n,\nu}^{2k}]=(2k-1)!!\] and \[\lim_{n\to\infty}E[W_{n,\nu}^{2k-1}]=0\] for any k,$n(\nu)\in \mathbb{Z} \text{ and } k,n(\nu)>0$. Therefore, the second half statement of the Theorem is a generalization of the first half. 

\begin{equation}
E[W_{n,\nu^{(1)}}^{k_1} W_{n,\nu^{(2)}}^{k_2}\cdots W_{n,\nu^{(m)}}^{k_m}]=\sum_{\lambda\vdash n} \frac{f^{\lambda} \sum_{\delta \vdash n}\chi^\lambda(\delta)}{n!} (\prod_{i}(\frac { \sqrt{d(\nu^{(i)})}[n]_{n(\nu^{(i)}} \chi^{\lambda}(\nu^{(i)})}{f^\lambda n^{n(\nu^{(i)})/2}\sqrt{c(\nu^{(i)})} })^{k_i} ) \label{l1}  
\end{equation}
To avoid heavy notation on the right hand side of Equation \eqref{l1}, we write $B_{n,\nu,k,K}$ to be 
\begin{equation}
B_{n,\nu,k,\delta}=\frac{\prod_i ([n]_{n(\nu^{(i)}}/c(\nu^{(i)}))^{k_i}}{n!}\sum_{\lambda\vdash n} \frac{\prod_i \chi^\lambda(\nu^{(i)})^{k_i}}{(f^\lambda)^{k-1}}  \chi^\lambda (\delta) \frac{n!}{c(\delta)}.  \label{l2} 
\end{equation}
Thus, we rewrite Equation \eqref{l1} in a simpler form.
\begin{equation}
\eqref{l1}=\sum_{\delta\vdash n} \frac{c(\delta)}{n!} B_{n,\nu,k,\delta}\prod_i (\frac{d(\nu^{(i)})c(\nu^{(i)})}{n^{n(\nu^{(i)})}})^{k_i/2}\label{l3} 
\end{equation}

Recall that $\frac{n!}{c(\delta)}$ is the size of the conjugacy class of $\delta$. Given an element $g\in S_n$ with g in the conjugacy class of $\delta$, applying Lemma \ref{keylemma} to \eqref{l2} shows that $\frac{B_{n,\nu,k,\delta}}{n!/c(\delta)}$ is the number of k-tuples $(g_1^{(1)},g_2^{(1)},\cdots,g_{k_1}^{(1)}\cdots g_1^{(m)},g_2^{(m)},\cdots,g_{k_m}^{(m)})$ where each $g_j^{(i)}$ is in the conjugacy class of $\nu^{(i)}$, such that $g_1^{(1)}...g_{k_m}^{(m)}=g$. Thus, $B_{n,\nu,k,\delta}$ is the number of k-tuples $(g_1^{(1)},g_2^{(1)},\cdots,g_{k_1}^{(1)}\cdots g_1^{(m)},g_2^{(m)},\cdots,g_{k_m}^{(m)})$ where each $g_j^{(i)}$ is in the conjugacy class of $\nu^{(i)}$, such that $g_1^{(1)}...g_{k_m}^{(m)}$ is in the conjugacy class of $\delta$. We denote the set consisting of such tuples by $\mathcal{T}_{n,\nu,k,\delta}.$ In this proof, we slightly abuse the notation on $\nu$ and $k$ by writing $n(\nu)=\sum_i n(\nu^{(i)})$, $k=\sum_i k_i$ and $n(\nu)k=\sum_i  n(\nu^{(i)})k_i.$ 

It suffice to show the following:
\begin{itemize}
    \item \begin{equation}
    \lim_{n\to \infty} \sum_{\delta\neq \emptyset,\delta\in \hat S_n} \frac{c(\delta)}{n!} B_{n,\nu,k,\delta}\prod_i (\frac{d(\nu^{(i)})c(\nu^{(i)})}{n^{n(\nu^{(i)})}})^{k_i/2}=0;  \label{ll1}
    \end{equation}
    \item  \begin{equation}\lim_{n\to \infty}  B_{n,\nu,k,\{1^n\}}(\frac{c(\nu)}{n^{n(\nu)}})^{k/2}=0 \label{ll2}\end{equation} if $k$ is odd and $\nu=\{q\}$ ;     
    \item  \begin{equation}\lim_{n\to \infty}  B_{n,\nu,k,\{1^n\}}(\frac{c(\nu)}{n^{n(\nu)}})^{k/2}=(2k-1)!!\label{ll3}\end{equation} if $k$ is even and $\nu=\{q\}$ ;  
    \item \begin{equation}\lim_{n\to \infty}  B_{n,\nu,k,\delta}\prod_i (\frac{d(\nu^{(i)})c(\nu^{(i)})}{n^{n(\nu^{(i)})}})^{k_i/2}=\prod_{j\geq 2} \int_{-\infty}^\infty \frac{e^{-x^2}}{\sqrt{2\pi}} (\mathcal{H}_{l_j(\nu^{(1)})}(x))^{k_1} (\mathcal{H}_{l_j(\nu^{(2)})}(x))^{k_2} \cdots (\mathcal{H}_{l_j(\nu^{(m)})}(x))^{k_m} dx \label{ll4}\end{equation}
\end{itemize}

We first prove $\eqref{ll1}$. Therefore, in this part, every $\delta$ mentioned has the property that it is not comprised of one's.

\[ \sum_{\delta\neq \{1^l\},\delta\in \hat S_n} B_{n,\nu,k,\delta} (\prod_i (\frac{d(\nu^{(i)})c(\nu^{(i)})}{n^{n(\nu^{(i)})}})^{k_i/2}) \frac{c(\delta)}{n!}= \sum_{\omega \in \mathcal{T}_{n,\nu,k,\delta},\delta\vdash n, \delta\neq \{1^n\}} \frac{c(\delta)}{n!} \prod_i (\frac{d(\nu^{(i)})c(\nu^{(i)})}{n^{n(\nu^{(i)})}})^{k_i/2}\]

Rearranging and grouping the sums in all tuples in $\mathcal{T}_{n,\nu,k,\delta}$ for all $\delta$ provides the following: 
\begin{equation}
=\sum_{\mu\vdash n(\nu)k,\delta \vdash n } {n \choose {l_1(\mu),l_2(\mu),...l_i(\mu)}}(\prod_i (\frac{d(\nu^{(i)})c(\nu^{(i)})}{n^{n(\nu^{(i)})}})^{k_i/2})\frac{c(\delta)}{n!}C_{\mu,\delta,\nu,k},\label{l4}
\end{equation}

where $C_{\mu,\delta,\nu,k}=|\{\omega\in \mathcal{T}_{l(\mu),\nu,k,\delta}|\overline{\omega}=\mu,p(\omega)=\delta\}|$.  Equality \eqref{l4} sums over $\mu, \delta$. We notice that all tuples in $\mathcal{T}_{l(\mu),\nu,k,\delta}$ take elements from $S_{l(\mu)}$, and we slightly abuse the notation of cycle type equality by ordering $\delta, \eta$ is equivalent if the only difference is the number of trailing one's. To be explicit, $\{4,3,1,1,1\}$ is equivalent to $\{4,3,1\}$
 
Observing that given any $\omega\in \mathcal{T}_{n,\nu,k,\delta},$  the inequality $l_1(p(\omega))\geq n-n(\nu)k$ always holds since any tuple only contains $n(\nu)k$ elements and thus at least $n-n(\nu)k$ fixed elements. Instead of summing over all $\delta$, we only need to sum over partitions of n with at least $n-n(\nu)k$'s trailing ones, since all other $\delta$ correspond to $C_{\mu,\delta,\nu,k}=0$.

\[\eqref{l4}=\sum_{\mu,\delta \vdash n(\nu)k} {[n]_{\sum_{l_i(\mu)}}}(\prod_i (\frac{d(\nu^{(i)})c(\nu^{(i)})}{n^{n(\nu^{(i)})}})^{k_i/2}) \frac{l_{1}(\delta+1^{n-n(\nu)k})!}{n! } C_{\mu,\delta,\nu,k}\frac{\prod_{j>1}l_{j}(\delta)! j^{l_{j}(\delta)}}{\prod {l_j(\mu)}!}\]
Recall that $\delta$ does not consist of one's, for any nonzero $C_{\mu,\delta,\nu,k}$, the corresponding $\delta,\mu$ has inequality \begin{equation}l_1(\delta)+\sum_i l_i(\mu)\geq n(\nu)k.\label{l5}\end{equation} For any tuple $\omega$ in the set corresponding $C_{\mu,\delta,\nu,k}$, each $i \in S_{l(\mu)}$ is either fixed by the product of $\omega$, or it appears in some places in $\omega$. Since $\delta,\mu\vdash n(\nu)k$, the number of fixed elements by $\omega$ in $S_{n(\nu)k}$ is $l_1(\delta)$ and the number of elements appearing in $\omega$ is $\sum_i l_i(\mu)$. With the inequality \eqref{l5}, we provide an equality needed for the next step:
\[
[n]_{l_1(\delta)-n(\nu)k+\sum_{j} {l_j(\mu)}}  \frac{[n-\sum_{j} l_j(\mu)]_{n(\nu)k-l_1(\delta)}}{[n]_{n(\nu)k-l_1(\delta)}}=[n]_{\sum_{l_i(\mu)}}\frac{l_{1}(\delta+1^{n-n(\nu)k})!}{n!} 
\]

By rearranging order of terms, we have 
\begin{equation}\eqref{l4}=\sum_{\mu,\delta \vdash n(\nu)k} (\frac{[n]_{l_1(\delta)-n(\nu)k+\sum_{j} {l_j(\mu)}}}{\prod_i n^{n(\nu^{(i)})k_i/2}})\frac{C_{\mu,\delta,\nu,k} \prod_i (d(\nu^{(i)})c(\nu^{(i)}))^{k_i/2} \prod_{j>1}l_{j}(\delta)! j^{l_{j}(\delta)} }{\prod_j {l_j(\mu)}!} \frac{[n-\sum_{j} l_j(\mu)]_{n(\nu)k-l_1(\delta)}}{[n]_{n(\nu)k-l_1(\delta)}}.\label{l10}
\end{equation}

We denote \begin{equation}
    A_{\mu,\delta}=\frac{C_{\mu,\delta,\nu,k}\prod_i (d(\nu^{(i)})c(\nu^{(i)}))^{k_i/2}\prod_{j>1}l_{j}(\delta)! j^{l_{j}(\delta)} }{\prod_j {l_j(\mu)}!} \frac{[n-\sum_{j} l_j(\mu)]_{n(\nu)k-l_1(\delta)}}{[n]_{n(\nu)k-l_1(\delta)}}.
\end{equation}
    Thus,\[\eqref{l4}=\sum_{\mu,\delta \vdash n(\nu)k} (\frac{[n]_{l_1(\delta)-n(\nu)k+\sum_{j} {l_j(\mu)}}}{\prod_i n^{n(\nu^{(i)})k_i/2}}) A_{\mu,\delta}\] 
Since there is no n in $C_{\mu,\delta,\nu,k}$, we notice that $C_{\mu,\delta,\nu,k}=O(1)$.
By
\[\lim_{n\to\infty} \frac{(n-\sum_{j} l_j(\mu))_{n(\nu)k-l_1(\lambda)}}{(n)_{n(\nu)k-l_1(\lambda)}}=1,\] we obtain $A_{\delta,\mu}=O(1)$.

\[\eqref{l4}\leq \sum_{\mu,\delta \vdash n(\nu)k} n^{l_1(\delta)-n(\nu)k+\sum_{j} {l_j(\mu)}-n(\nu)k/2}A_{\mu,\delta}\] 

Note that for any $\mu,\delta \text{ such that } C_{\mu,\delta,\nu,k}\neq 0$, pick any $\omega\in\mathcal{T}_{l(\mu),\nu,k,\delta}\text{ where }\overline{\omega}=\mu,p(\omega)=\delta $. Elements in the underlying set of $S_{n(\nu)k)}$ that only appear once in $\omega$ can't be fixed points in the conjugacy class of $\delta$. Explicitly, \begin{equation}
n(\nu)k-l_1(\mu)\geq l_1 (\delta)\label{l6},\end{equation} with the equality holds when all elements appearing more than once are fixed points. Another inequality needed is, \begin{equation}\sum_{j>1} l_{j}(\mu)\leq n(\nu)k/2\label{l7}\end{equation} with equality holds when $\mu=(2,...,2)$. When $\mu\vdash n(\nu)k$ and $\mu$ comprises of 2's, the inequality \eqref{l6} is a strict inequality since $l_1(\delta)<n(\nu)q$, ($\delta$ does not comprise of 1), and $l_1(\mu)=0$. By inequalities $\eqref{l6},\eqref{l7}$, we obtain $l_1(\delta)-n(\nu)k+l_1(\mu)+\sum_{j>1} {l_j(\mu)}-n(\nu)k/2\leq 0.$ Since the first four summands of the left side inequality are integer, by  equality condition on $\eqref{l6},\eqref{l7}$ we rewrite it as $l_1(\delta)-3n(\nu)k/2+l_1(\mu)+\sum_{j>1} {l_j(\mu)}\leq -1/2.$

Therefore, \[\eqref{l4}\leq \sum_{\mu,\delta \vdash n(\nu)k} n^{-1/2}A_{\mu,\delta}=\sum_{\mu,\delta \vdash n(\nu)k,A_{\mu,\delta}\neq 0 }O(n^{-\frac{1}{2}})\]
The sum over $\mu,\delta$ which are partition of $n(\nu)k$ are constant regarding n, so when n$\to\infty$,$\eqref{l4}\to$0. The inequality $\eqref{ll1}$ is proven.

Equalities $\eqref{ll2}, \eqref{ll3}$ and $\eqref{ll4}$ was first proven in \cite{Hora1998}. In this paper, since we applied combinatorial computation a slightly different set, we will give a detailed proof of $\eqref{ll2}$ and $\eqref{ll3}$. And we refer readers to \cite{Hora1998} of a full proof about Equality $\eqref{ll4}$.

For  equalities $\eqref{ll2}$ and $\eqref{ll3}$, we start by modify the proof of inequality $\eqref{ll1}$. Every step before inequality $\eqref{l10}$ works for the inequalities $\eqref{ll2}$ and $\eqref{ll3}$, since the condition on $\delta\neq\{1^{n(\nu)k}\}$ is not applied and we have not yet sum over all $\delta.$

\begin{equation} B_{n,\{q\},k,\{1^{qk}\}}(\frac{c(\{q\})}{n^{q}})^{k/2}=\sum_{\mu \vdash qk} (\frac{1}{n^{q}})^{k/2}{[n]_{\sum_{j} {l_j(\mu)}}}\frac{C_{\mu,\{1^{qk}\},\{q\},k}c(\{q\})^{k/2} }{\prod_j {l_j(\{q\})}!}\label{l11}
\end{equation}

Each element of any tuple’s symbol in $\mathcal{T}_{n,\{q\},k,\{1^{qk}\}}$ is greater than 1 since otherwise the corresponding element in the tuple is not a fixed point. That is, $l_1(\mu)=0$ for nonzero $C_{\mu,\{1^{qk}\},\{q\},k}.$
Since $\sum_{j}l_j(\mu)j=qk, l_1(\mu)=0,\text{ then } \sum_{j>1}l_j(\mu)\leq qk/2$ with equality holds when  $\mu=\{2,2,....2\},k\text{ or } q \text { is even}.$
The above limit is 0 unless the equality holds. This boils down to \begin{equation}\lim_{n\to\infty}  B_{n,\{q\},k,\{1^{qk}\}}(\frac{c(\{q\})}{n^{q}})^{k/2}=\frac{C_{\mu,\{1^{qk}\},\{q\},k}c(\{q\})^{k/2} }{(qk/2)!}.\label{l15}\end{equation}

Under $\mu=\{2,2,....2\}$, from Lemma $\ref{keyobserv}$, the total length of any $\omega$ in the set corresponding to $C_{\mu,\{1^{qk}\},\{q\},k}$ is even ($k$ is even). Since the $p(\omega)$ is the partition comprised of 1, $k(q-1)$ is even. The discussion above indicates $k$ is even given $C_{\mu,\{1^{qk}\},\{q\},k}\neq 0$. The equality $\eqref{ll2}$ is proven.

Similarly as the proof for the equality $\eqref{ll1}$, we write \begin{equation}
    \eqref{l15}=\frac{C_{\{2^{qk/2}\},\{1^{qk}\},\{q\},k}q^{k/2}}{qk/2!}.
\end{equation}

We write our original k as 2k for the rest of the proof for equality \eqref{ll3}.
Thus, with Lemma \ref{keyobserv}, the computation boils down to a pure enumerative combinatorics problem. $\frac{(qk)!}{q^k\times k!}$ is the number of k q-cycle with each element distinct and comes from $\{1,2,...qk\}$. $\frac{(2k)!}{2^k}$ it the number of ways assigning k elements in 2k ordered block with each element appears exactly twice. 

For equality \eqref{ll4}, we modify \eqref{l10} and rewrite \eqref{ll4} as \[\eqref{ll4}=\lim_{n\to \infty}\sum_{\mu \vdash n(\nu)k} (\frac{[n]_{\sum_{j} {l_j(\mu)}}}{n^{n(\nu)k/2}}) \frac{C_{\mu,id,\nu,k} \prod_i (d(\nu^{(i)})c(\nu^{(i)}))^{k_i/2 }}{\prod_j {l_j(\mu)}!}. \]
Similarly, the only nonzero term after taking n to infinity is $\mu=\{2,2,\cdots,2\}.$
\begin{equation}\eqref{ll4}=\frac{C_{\{2^{n(\nu)k/2}\},id,\nu,k} \prod_i (d(\nu^{(i)})c(\nu^{(i)}))^{k_i/2 }}{ n(\nu)k/2!}. 
\end{equation}
Again, with Lemma \ref{keyobserv}, the computation boils down to a pure enumerative combinatorics problem.

A detailed computation is given in \cite{Hora1998}. The general strategy is by the theory of matching polynomials of graphs and it's connection with Hermite polynomials. Chapter 1 of \cite{godsil2017algebraic} provides a detailed introduction to this topic.

\end{proof}

\section{Further Direction}
One interesting further step is to investigate whether the error bound on the moment will imply any error bound on the random variable. Johansson \cite{Johansson} provides analysis that reverse engineering this process. 
Another interesting direction is to generalize this method of moment to the q-analog of Plancherel measure or conjugacy measure. The case in Plancherel measure appears naturally in Iwahori-Hecke algebra representation. Also note that Murnaghan-Nakayama rule in q-analog provides a recursive combinatorial method of computing irreducible character of Hecke-algebra. This is provided in Remmel and Ram's work \cite{Ram1997}. 
Similar to the method of mement  in q-analog of Plancherel measure, it is interesting to investigate  a similar q-analog of the conjugacy measure, and potentially a central limit theorem in it's q-analog version.

\section{acknoledgements}
The author thanks Jason Fulman for initiating this question and useful conversation. The author also thanks Eric Rains for useful comments.
\printbibliography[heading=bibintoc]

@article{Roichman1997,
  title={Decomposition of the conjugacy representation of the symmetric groups},
  author={Roichman, Yuval},
  journal={Israel Journal of Mathematics},
  volume={97},
  pages={305--316},
  year={1997},
  publisher={Springer}
}

@article{Johansson,
  title={On random matrices from the compact classical groups},
  author={Johansson, Kurt},
  journal={Annals of mathematics},
  pages={519--545},
  year={1997},
  publisher={JSTOR}
}

@article{F2012,
  title={Asymptotics of q-Plancherel measures},
  author={F{\'e}ray, Valentin and M{\'e}liot, Pierre-Lo{\"\i}c},
  journal={Probability Theory and Related Fields},
  volume={152},
  pages={589--624},
  year={2012},
  publisher={Springer}
}

@book{godsil2017algebraic,
  title={Algebraic combinatorics},
  author={Godsil, Chris},
  year={2017},
  publisher={Routledge}
}

@article{Adin1987,
  title={The conjugacy character of S n tends to be regular},
  author={Adin, Ron M and Frumkin, Avital},
  journal={Israel Journal of Mathematics},
  volume={59},
  pages={234--240},
  year={1987},
  publisher={Springer}
}

@article{fulmansym,
  title={Stein’s method and Plancherel measure of the symmetric group},
  author={Fulman, Jason},
  journal={Transactions of the American mathematical society},
  volume={357},
  number={2},
  pages={555--570},
  year={2005}
}

@article{Hora1998,
  title={Central limit theorem for the adjacency operators on the infinite symmetric group},
  author={Hora, Akihito},
  journal={Communications in mathematical physics},
  volume={195},
  pages={405--416},
  year={1998},
  publisher={Springer}
}

@article{Ram1997,
  title={Applications of the Frobenius formulas for the characters of the symmetric group and the Hecke algebras of type A},
  author={Ram, Arun and Remmel, Jeffrey B},
  journal={Journal of Algebraic Combinatorics},
  volume={6},
  pages={59--87},
  year={1997},
  publisher={Springer}
}

@inproceedings{olshanski,
  title={Kerov’s central limit theorem for the Plancherel measure on Young diagrams},
  author={Ivanov, Vladimir and Olshanski, Grigori},
  booktitle={Symmetric functions 2001: surveys of developments and perspectives},
  pages={93--151},
  year={2002},
  organization={Springer}
}

@ARTICLE{Kerov93gaussianlimit,
    author = {S. Kerov},
    title = {Gaussian Limit For The Plancherel Measure Of The Symmetric Group},
    journal = {C. R. Acad. Sci. Paris},
    year = {1993},
    volume = {316},
    pages = {303--308}
}

@article{diaconis111,
  title={Generating a random permutation with random transpositions},
  author={Diaconis, Persi and Shahshahani, Mehrdad},
  journal={Zeitschrift f{\"u}r Wahrscheinlichkeitstheorie und verwandte Gebiete},
  volume={57},
  number={2},
  pages={159--179},
  year={1981},
  publisher={Springer}
}

@book{stanley,
author = {Stanley, Richard P.},
title = {Enumerative Combinatorics: Volume 1},
year = {2011},
isbn = {1107602629},
publisher = {Cambridge University Press},
address = {USA},
edition = {2nd}
}

@misc{mellot,
      title={Gaussian concentration of the q-characters of the Hecke algebras of type A}, 
      author={Pierre-Loïc Méliot},
      year={2010},
      eprint={1009.4288},
      archivePrefix={arXiv},
      primaryClass={math.RT},
      url={https://arxiv.org/abs/1009.4288}
}
\end{document}